\documentclass[review]{elsarticle}

\usepackage{lineno,hyperref}
\modulolinenumbers[5]

\journal{Computers and Operations Research}

\usepackage{bm}
\usepackage{amssymb}
\usepackage{mathtools}
\usepackage{breqn}
\usepackage{tikz} 
\usepackage{pgfplots}
\usepackage{amsthm}
\usepackage{amsmath}
\usepackage{float}
\restylefloat{table}

\newcommand{\vect}[1]{\boldsymbol{#1}}

\newtheorem{theorem}{Theorem}

\newtheorem{algorithm}{Algorithm}
\DeclareMathOperator*{\argmax}{arg\,max}









\bibliographystyle{elsarticle-num}

\begin{document}

\begin{frontmatter}

\title{Using the WOWA criterion for two-stage decision making problems}

\author{Jaeyoong Lim}
\ead{jae0908@kaist.ac.kr}
\author{Sungsoo Park\corref{mycorrespondingauthor}}
\cortext[mycorrespondingauthor]{Corresponding author}
\ead{sspark@kaist.ac.kr}
\address{ Department of industrial and Systems Engineering, Korea Advanced Institute of Science and Technology, 291 Daehak-ro, Yuseong-gu, Daejeon 34141, Republic of Korea}




\begin{abstract}
The weighted OWA (WOWA) is a function that aggregates a set of values with weights assigned based on the rank and relative importance of each value. The weighted OWA of uncertain objective functions can generalize many of the criteria that is used in decision making under uncertainty. 

In this paper, we apply the WOWA criterion to two-stage decision making problems, and present decomposition algorithms to solve them. The algorithms are applied to location-transportation problem with uncertain demands and computational results are presented.
\end{abstract}

\begin{keyword}
Ordered weighted averaging, Weighted OWA, Robust optimization, Two-stage problem
\end{keyword}

\end{frontmatter}


\section{Introduction}
\label{sec:1}
Two-stage decision making problems are problems that involve uncertainty where some of the decisions can be made after the uncertainty is realized, and it models many of the practical problems that are under uncertainty. In this problem there are two types of decision variables. The first type of decision variables are decided prior to the realization of the uncertainty. After the uncertainty is realized, second type of decision variables are chosen so as to optimize the objective function. 

Two-stage decision making problems have been developed in two different streams based on how the uncertainty is handled. In 
the stochastic approach, probability distributions of the uncertain parameters are assumed to be known, and a solution that yields the best expected outcome is found. For a detailed model and techniques to solve the problem, refer to \cite{birge2011introduction} and \cite{shapiro2014lectures}. On the other hand, robust approach is used for more limited information on uncertainty. An uncertainty set is given by the decision maker and the approach finds a solution that optimizes the worst case for the given set of uncertainty. Interested reader may refer to \cite{bertsimas2011theory}.

One of the drawbacks of the stochastic optimization is that it is unlikely that the full probability distributions of uncertain parameters are known. In many cases, little or no information of uncertain parameters are available, and in such circumstances the method cannot be used. Inaccurate or subjective probabilities may be applied and used, however, such approach may result in solutions that are likely to give bad outcomes. In contrast, robust optimization gives a solution whose objective function value is acceptable over all the scenarios, and can be used in more general situations. However, it cannot utilize any of the stochastic information even if some of the information may be available. Furthermore, since it only focuses on the worst case, the solution may not be Pareto optimal, that is, there may be solutions that provide better results for some scenarios while having no worse result for any scenarios \cite{dubois1999computing}.   
One way to compromise between the two approaches is to use the weighted OWA (WOWA) operator. The WOWA operator is an extension of ordered weighted averaging (OWA) operator, a rank dependent operator developed by Yager \cite{yager1988ordered}. The OWA operator aggregates a given set of values with weights assigned based on their rank. That is, for a weight vector $\vect{w} = (w_1,...,w_K)$ such that $w_k \geq 0$ $\forall k=1,...,K$ and $\sum_{k=1}^K w_k =1$, given a vector $\vect{a}\in \mathbb{R}^K$,
\begin{linenomath*}
\begin{align*}
\text{owa}_{(\vect{w})} (\vect{a}) = \sum_{k=1}^K w_k a_{\tau(k)},
\end{align*}
\end{linenomath*}
where $\tau$ is a permutation of $\{1,...,K\}$ such that $a_{\tau(1)} \geq ... \geq a_{\tau(K)}$. The OWA operator can be used in optimization problems with discrete uncertainty to aggregate uncertain objective functions. If the aggregated function is used as a criterion, it generalizes many of the criteria used in decision making under uncertainty \cite{kasperski2016robust}. For example, in a minimization problem, setting $w_1=1, w_k=0$ $\forall k \neq 1$ gives the minmax criterion while setting $w_1 = \alpha, w_K=1-\alpha,$ and 0 otherwise gives the Hurwicz criterion. When the weights are non-decreasing ($w_1 \geq,...,\geq w_K$), higher weights are assigned to higher solution costs and hence can be used to obtain robust solutions. One of the merits of using this approach is that the soltuions that are not pareto optimal can be avoided by using positive values for the weight vector ($w_k>0$ $\forall k\in \{1,...,K\}$) \cite{kasperski2016robust}. The OWA operator has been used extensively in group decision making, informational retrieval, learning algorithms, etc. \cite{yager2012ordered}. With its applications, numerous methods for determining weights for OWA have been investigated as well \cite{xu2005overview}. Also, many generalized versions of the operator, such as induced ordered weighted averaging (IOWA) and generalized ordered weighted averaging (GOWA) have been proposed and used \cite{yager1999induced}, \cite{yager2004generalized}.

The WOWA operator, proposed by Torra \cite{torra1997weighted}, is a generalization of the OWA operator that incorporates a rank independent weight as well. The operator allows additional information, such as the probability of each scenario, to be accounted in decision making under uncertainty, by using it as a rank independent weight. To differentiate the rank dependent weight vector from the rank independent weight vector, we call the former one \textit{preferential vector} and the latter one \textit{importance vector}. Let $\vect{w}$ and $\vect{p}$ be a preferential vector and an importance vector of dimension $K$ respectively such that $w_k \in [0,1]$, $\sum_k w_k = 1$, and $p_k \in [0,1]$, $\sum_k p_k = 1$. Given a vector $\vect{a}=(a_1,...,a_K)$, the weighted OWA operator applied to $\vect{a}$ is defined as:
\begin{linenomath*}
\begin{align*}
\text{wowa}_{(\vect{w}, \vect{p})} (\vect{a}) = \sum_{k=1}^K \delta_k a_{\tau(k)},
\end{align*}
\end{linenomath*}
where $\tau$ is a permutation of $\{1,...,K\}$ such that $a_{\tau(1)} \geq ... \geq a_{\tau(K)}$. The weights $\delta$ are defined as $\delta_k = w^*(\sum_{j \leq k} p_{\tau(j)}) - w^*(\sum_{j < k} p_{\tau(j)})$, where $w^*: [0,1] \rightarrow [0,1]$ is a non-decreasing function that interpolates (0,0) with $(i/K, \sum_{j\leq i} w_j)$ for all $i\in \{1,...K\}$. It should be noted that if objective function for each scenario are aggregated using WOWA operator, the resulting function value can become the expected cost or the worst case cost depending on the choice of the preferential weight and importance weight. For example, if $w_k = 1/K$ $\forall k=\{1,...,K\}$ and the probability of scenarios is used as the importance weight, the WOWA aggregation value becomes the expected cost. On the other hand, if $w_1 = 1, w_k=0$ $\forall k\neq 1$ and $p_k = 1/K$ $\forall k=\{1,...,K\}$, the WOWA aggregation value becomes the worst case performance. Hence using the WOWA criterion for optimization problems with uncertain objective functions provides a link between robust optimization and the stochastic optimization \cite{kasperski2016using}. In fact, the WOWA aggregation value can be seen as the expected cost of a solution using the probability distorted by a rank dependent weight vector (see \cite{diecidue2001intuition} and \cite{torra1998some} for further interpretation of the operator). The WOWA criterion has been applied to many fields including multicriteria optimization \cite{ogryczak2011reference}, metadata aggregation problems \cite{nettleton2001processing} \cite{damiani2006wowa}, and robust discrete optimization problems \cite{kasperski2016using}. 

In this paper we use the WOWA criterion to two-stage decision making problems. In section 2, we define the two-stage decision making problem with WOWA criterion. Linear formulation of the problem and algorithms to solve it are described in section 3. The algorithms are then tested on a location transportation problem with WOWA criterion and their performances are reported in section 4.

\section{Problem definition} 
\label{sec:2}
We focus on the following two-stage decision making problem:
\begin{linenomath*}
\begin{align*}
\min \quad & \vect{c}' \vect{x} + \vect{d}' \vect{y}
\\\text{s.t.} \quad & \vect{A} \vect{x}+\vect{B} \vect{y} = \vect{h} \tag{1} \label{two-stage}
\\ & \vect{x} \in \vect{X}, \vect{x} \geq \vect{0}, \vect{y} \geq \vect{0},
\end{align*}
\end{linenomath*}
where $\vect{x}=\{x_1,...,x_{n_1}\}$ are the first-stage decision variables, $\vect{y}=\{y_1,...,y_{n_2}\}$ are the second-stage decision variables and some or all of $(\vect{d}, \vect{A}, \vect{B}, \vect{h})$ are uncertain with $K$ possible realizations: $(\vect{d}_1, \vect{A}_1, \vect{B}_1, \vect{h}_1),..., (\vect{d}_K, \vect{A}_K, \vect{B}_K, \vect{h}_K)$.

\subsection{Two-stage decision making problem with WOWA criterion}
\label{subsec:2.2}
Let
\begin{linenomath*}
\begin{align*}
Q_k(\vect{x}) =\min & \quad \vect{d}_k' \vect{y}_k
\\ \text{s.t.} & \quad \vect{B}_k \vect{y}_k = \vect{h}_k - \vect{A}_k \vect{x} \tag{2} \label{form:q}
\\ & \quad \vect{y}_k \geq \vect{0}, 
\end{align*}
\end{linenomath*}
that is, $Q_k(\vect{x})$ is a recourse function for scenario $k$, with fixed first-stage solution $\vect{x}$. Given a preferential vector $\vect{w}$ and an importance vector $\vect{p}$, the two-stage decision making problem with WOWA criterion can be written as:
\begin{linenomath*}
\begin{align*}
\min \quad & \vect{c}' \vect{x} + \text{wowa}_{(\vect{w}, \vect{p})} (Q_1(\vect{x}),...,Q_K(\vect{x})) \tag{3} \label{form:WOWA_two-stage}
\\\text{s.t.} \quad & \vect{x} \in \vect{X}, \vect{x} \geq \vect{0}.
\end{align*}
\end{linenomath*}
We assume non-increasing preferential weight ($w_1 \geq w_2,...,\geq w_K$), and $w^*$ to be a concave function to assure that higher weights are assigned to higher costs. The model can be used whether or not the probability distribution of the scenarios is available. If the probability is unavailable, equal weight can be assigned to the importance vector $\vect{p}$, which then only utilize the scenario set like in the robust approach. However, unlike the robust optimization, if the probabilities for scenarios are available, the information can be utilized by assigning the probabilities to the importance weights. Hence, the model can be viewed as a robust approach with stochastic information incorporated. However, since the risk-neutral stochastic two-stage problem is a special case of the two-stage problem with WOWA criterion, all the negative complexity results for the risk-neutral stochastic two-stage problem hold for the two-stage problem with WOWA criterion. We now discuss the methods to solve the problem.

\section{Solution approach}
\label{sec:3}
\subsection{Linear formulation}
\label{subsec:3.1}
The two-stage WOWA problem can be formulated as a linear problem using the reformulation method used in \cite{ogryczak2009efficient}. We use a piecewise linear function as the interpolating function $w^*$ for the sake of linear formulation. Given $\vect{x} \in \vect{X}$, let $\tau$ be a permutation such that $Q_{\tau(1)}(\vect{x})\geq Q_{\tau(2)}(\vect{x}) \geq ... \geq Q_{\tau(K)}(\vect{x})$. Let $\widetilde{p}_k = \sum_{j\leq k} p_{\tau(j)}$ for all $k\in [K]$, and $\widetilde{p}_0 = 0$, where $[K]$ denotes $\{1,...,K\}$. Then, 
\begin{linenomath*}
\begin{align*}
\text{wowa}_{(\vect{w},\vect{p})}(Q_1(\vect{x}),...,Q_K(\vect{x})) = K \sum_{j \in [K]} w_j \int_{\frac{j-1}{K}}^{\frac{j}{K}} u_{\vect{x}}(\mu) d\mu \tag{4} \label{equal:1}
\end{align*}
\end{linenomath*}
holds \cite{ogryczak2009efficient}, where $u_{\vect{x}}(\mu) = Q_{\tau(k)}(\vect{x})$ for $\widetilde{p}_{k-1} < \mu \leq \widetilde{p}_k$, $k \in [K]$, $\mu \in (0,1]$. Let $w_{K+1}=0$. Then (\ref{equal:1}) can be written as:
\begin{linenomath*}
\begin{align*}
\text{wowa}_{(\vect{w},\vect{p})}(Q_1(\vect{x}),...,Q_K(\vect{x})) & = K \left( \sum_{j \in [K]} w_j \int_{0}^{\frac{j}{K}} u_{\vect{x}}(\mu) d\mu - \sum_{j \in [K]} w_{j+1} \int_{0}^{\frac{j}{K}} u_{\vect{x}}(\mu) d\mu \right) \\ & = K \sum_{j\in [K]} (w_j - w_{j+1}) \int_{0}^{\frac{j}{K}} u_{\vect{x}}(\mu) d\mu.  \tag{5} \label{equal:2}
\end{align*}
\end{linenomath*}
Note that for a fixed $\vect{x}$, $\int_{0}^{\frac{j}{K}} u_{\vect{x}}(\mu) d\mu$ can be obtained by solving
\begin{linenomath*}
\begin{align*}
\max \quad & \sum_{k \in [K]} z_k Q_k(\vect{x})
\\ \text{s.t.} \quad & \sum_{k \in [K]} z_k = j/K \tag{6} \label{form:int_u}
\\ & 0 \leq z_k \leq p_k, \quad \forall k\in [K].
\end{align*}
\end{linenomath*}
We note that although $Q_k(\vect{x})$ is defined for scenario $k$, $Q_{\tau(1)}(\vect{x}) \geq Q_{\tau(2)}(\vect{x}) \geq ... \geq Q_{\tau(K)}(\vect{x})$ holds, hence the above formulation is correct. Taking the dual of (\ref{form:int_u}) and substituting to (\ref{equal:2}) gives 
\begin{linenomath*}
\begin{align*}
\text{wowa}_{(\vect{w},\vect{p})}(Q_1(\vect{x}),...,Q_K(\vect{x})) = \min \quad &  K \sum_{j\in [K]} (w_j - w_{j+1}) (j\beta_j/K+\sum_{i\in [K]}p_i \alpha_{ij})
\\\text{s.t.} \quad & \beta_j + \alpha_{kj} \geq Q_k(\vect{x}), \quad \forall k \in [K], \forall j \in [K] 
\\ & \alpha_{kj} \geq 0, \quad \forall k\in [K], \forall j\in [K], \tag{7} \label{form:linear_wowa} 
\end{align*}
\end{linenomath*}
and by (\ref{form:q}), the two-stage problem with WOWA criterion given in (\ref{form:WOWA_two-stage}) can be formulated as:
\begin{linenomath*}
\begin{align*}
\min \quad & \vect{c}' \vect{x} + K\sum_{j\in [K]}(w_j-w_{j+1})(j\beta_j/K+\sum_{i\in [K]}p_k \alpha_{kj}) 
\\\text{s.t.} \quad & \vect{A}_k \vect{x}+\vect{B}_k \vect{y}_k = \vect{h}_k, \quad \forall k \in [K] \tag{8a} \label{form:linear}
\\ & \beta_j + \alpha_{kj} \geq \vect{d}'_k \vect{y}_k, \quad \forall k\in[K], \forall j \in[K] \tag{8b} \label{form:linear_coup}
\\ & \vect{y}_k \geq 0, \alpha_{kj}\geq 0, \quad \forall k \in [K], \forall j \in [K], 
\\ & \vect{x} \in X, \vect{x} \geq \vect{0}.
\end{align*}
\end{linenomath*}

\subsection{Decomposition algorithms for two-stage WOWA problem}
One of the techniques frequently used in two-stage stochastic optimization problem is the L-shaped method. The L-shaped method utilizes the L-shaped block structure of the constraints and approximates the nonlinear term (recourse function) in the objective by building an outer linearization \cite{birge2011introduction}. However, the two-stage problem with WOWA criterion does not have a L-shaped block structure due to the coupling constraints (\ref{form:linear_coup}). Hence, the classical L-shaped method cannot be applied. In this section, we present two decomposition algorithms to solve the two-stage WOWA problem.
\subsubsection{Benders decomposition type algorithm}
One possible approach to solve two-stage WOWA problem is to ignore the coupling constraints (\ref{form:linear_coup}) and use Benders decomposition on the remaining L-shaped block structured constraints (\ref{form:linear}).

Consider the dual of (\ref{form:q}):
\begin{linenomath*}
\begin{align*}
\max & \quad \vect{\lambda}'_k(\vect{h}_k-\vect{A}_k \vect{x})	\tag{9} \label{form:q_max}
\\ \text{s.t.} & \quad \vect{\lambda}'_k \vect{B}_k \leq   \vect{d}'_k.
\end{align*}
\end{linenomath*}
Given a $\vect{x} \in \vect{X}$, suppose (\ref{form:q_max}) is unbounded for some $k \in [K]$. Then, by the duality, (\ref{form:q}) is infeasible, implying that no feasible second stage solution exists for first stage solution $\vect{x}$ and scenario $k$. Hence, $\vect{x} \in \vect{X}$ is feasible in (\ref{form:WOWA_two-stage}) if and only if it satisfies
\begin{linenomath*}
\begin{align*}
\vect{\gamma}_k' (\vect{h}_k-\vect{A}_k\vect{x}) \leq 0, \quad  \forall \vect{\gamma}_k \in \vect{\Gamma}_k, \forall k \in [K],
\end{align*}
\end{linenomath*}
where $\vect{\Gamma}_k$ is the set of extreme rays of $\{\vect{\lambda}_k: \vect{\lambda}_k' \vect{B}_k \leq \vect{0}'\}$.  
Also, if (\ref{form:q_max}) has an optimal solution, there exists an optimal solution of (\ref{form:q_max}) that is an extreme point of $\{\vect{\lambda}_k: \vect{\lambda}_k' \vect{B}_k \leq \vect{d}'_k\}$. That is, if $Q_k (\vect{x})$ is bounded, by the strong duality of linear program,
\begin{linenomath*}
\begin{align*}
Q_k (\vect{x}) = \max\{\vect{\lambda}_k' (\vect{h}_k-\vect{A}_k\vect{x}): \vect{\lambda}_k \in \vect{\Lambda}_k\}, 
\end{align*}
\end{linenomath*}
where $\vect{\Lambda}_k$ is the set of extreme points of $\{\vect{\lambda}_k: \vect{\lambda}_k' \vect{B}_k \leq \vect{d}'_k\}$. Hence, using (\ref{form:linear_wowa}) and by introducing auxiliary variables $q_k$ to represent the values of $Q_k (\vect{x})$, the formulation (\ref{form:WOWA_two-stage}) can be expressed as:
\begin{linenomath*}
\begin{align*}
\min \quad & \vect{c}' \vect{x} + K\sum_{j\in [K]}(w_j-w_{j+1})(j\beta_j/K+\sum_{k\in [K]}p_k \alpha_{kj})
\\\text{s.t.} \quad & \beta_j + \alpha_{kj} \geq q_k, \quad \forall k\in[K], \forall j \in[K] 			
\\ \quad & \vect{\gamma}_k' (\vect{h}_k-\vect{A}_k\vect{x}) \leq 0, \quad \forall \vect{\gamma}_k \in \vect{\Gamma}_k,  \forall k\in[K]	 \tag{10} \label{form:WOWA_Lshaped1}
\\ \quad & q_k \geq \vect{\lambda}_k' (\vect{h}_k-\vect{A}_k\vect{x}), \quad  \forall \vect{\lambda}_k \in \vect{\Lambda}_k, \forall k\in[K] 		
\\ \quad & \alpha_{kj} \geq 0, \forall k\in [K], \forall j \in [K]				 		
\\ \quad & \vect{x} \in \vect{X}, \vect{x} \geq 0.
\end{align*}
\end{linenomath*}
The formulation (\ref{form:WOWA_Lshaped1}) can be solved by the following delayed cut generation algorithm:
\begin{algorithm} (Benders decomposition type algorithm)
\\ \noindent \textbf{Step 0.} Set the tolerance level $\epsilon$. Initialize $\vect{\Gamma}'_k$ and $\vect{\Lambda}'_k$ for all $k\in [K]$, where $\vect{\Gamma}'_k \subseteq \vect{\Gamma}_k$ and $\vect{\Lambda}'_k \subseteq \vect{\Lambda}_k$. \\
\textbf{Step 1.} Solve the relaxed master problem of (\ref{form:WOWA_Lshaped1}) with the subset $\vect{\Gamma}'_k$ and $\vect{\Lambda}'_k$. Let $(\vect{x}^*, \vect{q}^*)$ and $LB$ be its optimal solution and optimal value respectively. \\
\textbf{Step 2.} For  $k\in [K]$, solve 
\begin{linenomath*}
\begin{align*}
Q_k(\vect{x}^*) = \max & \quad \vect{\lambda}_k' (\vect{h}_k-\vect{A}_k\vect{x}^*)	
\\ \text{s.t.} & \quad \vect{\lambda}_k' \vect{B}_k \leq \vect{d}'_k.								 		
\end{align*} 
\end{linenomath*}
If $Q_k(\vect{x}^*)$ is unbounded: $\vect{\Gamma}'_k \leftarrow \vect{\Gamma}'_k \cup \vect{\gamma}_k$, where $\vect{\gamma}_k$ is an extreme ray of  $\{\vect{\lambda}_k: \vect{\lambda}_k' \vect{B}_k \leq \vect{0}'\} $ such that $\vect{\gamma}_k'(\vect{h}_k-\vect{A}_k \vect{x}^v)>0$. If a new constraint is added, go to Step 1. Otherwise, go to Step 3.\\
\textbf{Step 3.} Set $UB = \vect{c}' \vect{x}^*+\text{wowa}_{(\vect{w},\vect{p})}(Q_1(\vect{x}^*),...,Q_K(\vect{x}^*))$. If $(UB-LB)/UB < \epsilon$, \textbf{stop}. Otherwise, for all $k\in [K]$, if $q^*_k < Q_k(\vect{x}^*)$, $\vect{\Lambda}'_k \leftarrow \vect{\Lambda}'_k \cup \vect{\lambda}^*_k$, where $\vect{\lambda}^*_k$ is an optimal solution of the problem in Step 2. Go to Step 1.
\end{algorithm}

In short, Algorithm 1 decomposes the L-shaped block structure constraint (\ref{form:linear}) using Benders decomposition and finds a lower approximations of $\vect{d}'_{k} \vect{y}_k$, which is represented by auxiliary variables $q_k$, to be used in (\ref{form:linear_coup}).

\subsubsection{Subgradient based decomposition algorithm}
We now present an alternative decomposition algorithm to solve the two-stage WOWA problem that uses the subgradient method for the optimality cut. In this algorithm, we iteratively obtain the subgradient of the WOWA of the second stage costs to approximate its value while applying Benders feasbility cut if the first-stage solution is infeasible for the second-stage problem.

Let $\theta$ be an auxiliary variable that represents the value of $\text{wowa}_{(\vect{w},\vect{p})}(Q_1(\vect{x}),...,Q_K(\vect{x}))$. Then, (\ref{form:WOWA_two-stage}) can be expressed as:
\begin{linenomath*}
\begin{align*}
\min \quad & \vect{c}' \vect{x} + \theta  
\\\text{s.t.} \quad & \theta \geq \text{wowa}_{(\vect{w}, \vect{p})} (Q_1(\vect{x}),...,Q_K(\vect{x})) 		\tag{11} \label{form:WOWA_Lshaped2}
\\ \quad & \vect{x} \in \vect{X}, \vect{x} \geq 0.
\end{align*}
\end{linenomath*}
If (\ref{form:q_max}) is unbounded, (\ref{form:WOWA_Lshaped2}) is infeasible, and if (\ref{form:q_max}) has an optimal solution, there exists an optimal solution that is an extreme point of the set described by the constraints of (\ref{form:q_max}). Hence, (\ref{form:WOWA_Lshaped2}) is equivalent to:
\begin{linenomath*}
\begin{align*}
\min \quad & \vect{c}' \vect{x} + \theta 
\\\text{s.t.} \quad & \theta \geq \text{wowa}_{(\vect{w}, \vect{p})} (\vect{\lambda}_1' (\vect{h}_1-\vect{A}_1\vect{x}),...,\vect{\lambda}_K' (\vect{h}_K-\vect{A}_K\vect{x})), \quad  \forall \vect{\lambda}_k \in \vect{\Lambda}_k, \forall k\in[K]
\\ \quad & \vect{\gamma}_k' (\vect{h}_k-\vect{A}_k\vect{x}) \leq 0, \quad \forall \vect{\gamma}_k \in \vect{\Gamma}_k, \forall k\in[K]	 \tag{12} \label{form:WOWA_Lshaped2_extended}					 		
\\ \quad & \vect{x} \in \vect{X}, \vect{x} \geq 0,
\end{align*}
\end{linenomath*}
where $\vect{\Gamma}_k$ is the set of extreme rays of $\{\vect{\lambda}_k: \vect{\lambda}_k' \vect{B}_k \leq \vect{0}'\}$, and $\vect{\Lambda}_k$ is the set of extreme points of $\{\vect{\lambda}_k: \vect{\lambda}_k' \vect{B}_k \leq \vect{d}'_k\}$. To solve (\ref{form:WOWA_Lshaped2_extended}), we utilize the following theorem:
\begin{theorem}
If $w^*$ is a concave function,  
\begin{linenomath*}
\begin{align*}
\text{wowa}_{(\vect{w},\vect{p})}(\vect{a}) = \max_{\pi \in \Pi} \sum_{k\in[K]} \{w^*(\sum_{i \leq k} p_{\pi(i)}) - w^*(\sum_{i < k} p_{\pi(i)})\}a_{\pi(k)},
\end{align*}
\end{linenomath*}
where $\Pi$ is the set of all permutations of $[K]$.
\end{theorem}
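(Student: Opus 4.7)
The plan is to show that the permutation $\tau$ used in the definition of $\text{wowa}_{(\vect{w},\vect{p})}(\vect{a})$ (the one sorting the components of $\vect{a}$ in decreasing order) is actually the optimizer of the right-hand-side maximum. Since $\tau$ is a particular element of $\Pi$, the inequality $\text{wowa}_{(\vect{w},\vect{p})}(\vect{a}) \leq \max_{\pi \in \Pi} (\cdots)$ is immediate from the definition of WOWA, so the entire content lies in the reverse inequality: no permutation $\pi \in \Pi$ can yield a larger value than the sorting permutation $\tau$.

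I would establish this by an adjacent-swap (bubble-sort) exchange argument. Suppose $\pi$ is not the sorting permutation. Then there exists an index $k$ with $a_{\pi(k)} < a_{\pi(k+1)}$. Let $\pi'$ be the permutation obtained from $\pi$ by swapping positions $k$ and $k+1$. The key observation is that the partial sums $\sum_{i \le j} p_{\pi(i)}$ change only at $j=k$: writing $A = \sum_{i<k} p_{\pi(i)}$, $p = p_{\pi(k)}$, $q = p_{\pi(k+1)}$, and $\alpha = a_{\pi(k)}$, $\beta = a_{\pi(k+1)}$ with $\alpha < \beta$, only the two terms indexed by $k$ and $k+1$ in the objective are affected. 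A direct expansion shows that the change equals
\begin{linenomath*}
\begin{align*}
(\beta - \alpha)\bigl[(w^*(A+p) - w^*(A)) - (w^*(A+p+q) - w^*(A+q))\bigr].
\end{align*}
\end{linenomath*}
Concavity of $w^*$ says that the increment $w^*(x+p) - w^*(x)$ is non-increasing in $x$, so the bracketed quantity is $\geq 0$; combined with $\beta - \alpha > 0$, this shows that swapping a misordered adjacent pair never decreases the objective.

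Iterating this exchange (finitely many adjacent swaps suffice to sort any permutation into decreasing order of the $a_{\pi(k)}$'s) yields a chain of permutations whose objective values form a non-decreasing sequence, terminating at the sorting permutation $\tau$. Hence $\tau$ maximizes the right-hand side, and the value attained at $\tau$ is by definition $\text{wowa}_{(\vect{w},\vect{p})}(\vect{a})$, proving the equality.

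The main obstacle is the algebraic swap computation: one must carefully track which partial sums change under the swap and reduce the resulting six-term expression cleanly so that concavity of $w^*$ (applied with base points $A$ and $A+q$, both shifted by the same increment $p$) produces the sign. Once this single inequality is in hand, the bubble-sort termination argument is routine.
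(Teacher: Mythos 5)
Your proposal is correct and follows essentially the same route as the paper: the upper bound is immediate because the sorting permutation $\tau$ belongs to $\Pi$, and the reverse direction is established by the same adjacent-transposition exchange computation, where concavity of $w^*$ (non-increasing increments over a shift of $p_{\pi(k)}$) shows that swapping a misordered adjacent pair cannot decrease the objective. The paper phrases the conclusion as "the swapped permutation is also optimal, hence some optimizer sorts $\vect{a}$ in non-increasing order," while you run the bubble sort explicitly; these are the same argument.
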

\begin{proof}
Clearly, $\text{wowa}_{(\vect{w},\vect{p})}(\vect{a}) \leq \max_{\pi \in \Pi} \sum_{k\in[K]} \{w^*(\sum_{i \leq k} p_{\pi(i)}) - w^*(\sum_{i < k} p_{\pi(i)})\}a_{\pi(k)}$ holds because $\text{wowa}_{(\vect{w},\vect{p})}(\vect{a}) = \sum_{k\in[K]} \{w^*(\sum_{i \leq k} p_{\tau(i)}) - w^*(\sum_{i < k} p_{\tau(i)})\}a_{\tau(k)}$, where $\tau$ is a permutation of $[K]$ such that $a_{\tau(1)} \geq ... \geq a_{\tau(K)}$. Suppose $\text{wowa}_{(\vect{w},\vect{p})}(\vect{a}) <  \max_{\pi \in \Pi} \sum_{k\in[K]} \{w^*(\sum_{i \leq k} p_{\pi(i)}) - w^*(\sum_{i < k} p_{\pi(i)})\}a_{\pi(k)}$. Let $\pi^* = argmax_{\pi \in \Pi}  \sum_{k\in[K]} \{w^*(\sum_{i \leq k} p_{\pi(i)}) - w^*(\sum_{i < k} p_{\pi(i)})\}a_{\pi(k)}$. Then $\exists k \in [K]$ such that $a_{\pi^*(k)} < a_{\pi^*(k+1)}$. Let $k'$ be such $k$, and let $\pi'$ be a permutation where $\pi^*(k')$ and $\pi^*(k'+1)$ is interchanged in $\pi^*$. Then 
\begin{linenomath*}
\begin{align*}
& \sum_{k\in[K]} \left\{w^*(\sum_{i \leq k} p_{\pi^*(i)}) - w^*(\sum_{i < k} p_{\pi^*(i)})\right\} a_{\pi^*(k)} - \sum_{k\in[K]} \left\{w^*(\sum_{i \leq k} p_{\pi'(i)}) - w^*(\sum_{i < k} p_{\pi'(i)})\right\}a_{\pi'(k)} \\
 &  = \left[ \left\{w^*(\sum_{i \leq k'} p_{\pi^*(i)}) - w^*(\sum_{i < k'} p_{\pi^*(i)})\right\} a_{\pi^*(k')} + \left\{w^*(\sum_{i \leq k'+1} p_{\pi^*(i)}) - w^*(\sum_{i \leq k'} p_{\pi^*(i)})\right\} a_{\pi^*(k'+1)} \right]\\
 & \quad -  \left[\left\{ w^*(\sum_{i < k'} p_{\pi^*(i)}+p_{\pi^*(k'+1)}) - w^*(\sum_{i < k'} p_{\pi^*(i)})\right\} a_{\pi^*(k'+1)}\right. \\
 & \quad + \left. \left\{ w^*(\sum_{i \leq k'+1} p_{\pi^*(i)}) - w^*(\sum_{i < k'} p_{\pi^*(i)}+p_{\pi^*(k'+1)})\right\} a_{\pi^*(k')} \right]\\
 & = \left\{ w^*(\sum_{i \leq k'} p_{\pi^*(i)}) - w^*(\sum_{i < k'} p_{\pi^*(i)}) - w^*(\sum_{i \leq k'+1} p_{\pi^*(i)}) + w^*(\sum_{i < k'} p_{\pi^*(i)}+p_{\pi^*(k'+1)})\right\} (a_{\pi^*(k')}-a_{\pi^*(k'+1)}) \\ & \leq 0,
\end{align*}
\end{linenomath*}
where the last inequality follows from $w^*$ being a concave function and $\vect{a}_{\pi^*(k)} < \vect{a}_{\pi^*(k+1)}$. Hence, $\pi'$ is also an optimal permutation and thus there exists an optimal permutation that permutes components of $\vect{a}$ in non-increasing order. 
\end{proof}
Recall that $\text{wowa}_{(\vect{w},\vect{p})}(\vect{a}) = \sum_{k=1}^K \delta_k a_{\tau(k)}$, where $\tau$ is a permutation of $\{1,...,K\}$ such that $a_{\tau(1)} \geq ... \geq a_{\tau(K)}$. Hence, in Theorem 1, 
\begin{align*}
\argmax_{\pi \in \Pi} \sum_{k\in[K]} \{w^*(\sum_{i \leq k} p_{\pi(i)}) - w^*(\sum_{i < k} p_{\pi(i)})\}a_{\pi(k)} 
\end{align*}
can be obtained by finding a permutation $\pi$ such that $a_{\pi(1)} \geq ... \geq a_{\pi(K)}$. 

By Theorem 1, (\ref{form:WOWA_Lshaped2_extended}) can be formulated as:
\begin{linenomath*}
\begin{align*}
\min \quad & \vect{c}' \vect{x} + \theta
\\\text{s.t.} \quad & \theta \geq \sum_{k\in[K]} \{w^*(\sum_{i \leq k} p_{\pi(i)}) - w^*(\sum_{i < k} p_{\pi(i)})\}\vect{\lambda}_{\pi(k)}' (\vect{h}_{\pi(k)}-\vect{A}_{\pi(k)}\vect{x}), 
\\ & \qquad \forall (\pi, \vect{\lambda}_1,...,\vect{\lambda}_K) \in \{(\pi, \vect{\lambda}_1,...,\vect{\lambda}_K):\pi \in \Pi, \vect{\lambda}_1 \in \vect{\Lambda}_1,...,\vect{\lambda}_K \in \vect{\Lambda}_K\}  
\\ \quad & \vect{\gamma}_k' (\vect{h}_k-\vect{A}_k\vect{x}) \leq 0, \quad  \forall \vect{\gamma}_k \in \vect{\Gamma}_k, \forall k\in[K]	\tag{13} \label{form:WOWA_Lshaped2_extended3} 
\\ \quad & \vect{x} \in \vect{X}, \vect{x} \geq 0,
\end{align*}
\end{linenomath*}
where $\Pi$ is the set of all permutations of $[K]$. The formulation (\ref{form:WOWA_Lshaped2_extended3}) can be solved by the following delayed cut generation algorithm:
\begin{algorithm} (Subgradient based decomposition algorithm for two-stage WOWA problem)
\\ \noindent \textbf{Step 0.} Set the tolerance level $\epsilon$. Initialize $\vect{\Gamma}'_k$ for all $k\in [K]$ and $\vect{\Lambda}'$, where $\vect{\Gamma}'_k \subseteq \vect{\Gamma}_k$ and $\vect{\Lambda}' \subseteq \{(\pi, \vect{\lambda}_1,...,\vect{\lambda}_K):\pi \in \Pi, \vect{\lambda}_1 \in \vect{\Lambda}_1,...,\vect{\lambda}_K \in \vect{\Lambda}_K\}$. \\
\textbf{Step 1.} Solve relaxed master problem of (\ref{form:WOWA_Lshaped2_extended3}) with the subset $\vect{\Gamma}'_k$ and $\vect{\Lambda}'$. Let $(\vect{x}^*, \theta^*)$ and $LB$ be its optimal solution and optimal value respectively. \\
\textbf{Step 2.} For  $k\in [K]$, solve 
\begin{linenomath*}
\begin{align*}
Q_k(\vect{x}^*) = \max & \quad \vect{\lambda}_k' (\vect{h}_k-\vect{A}_k\vect{x}^*)	
\\ \text{s.t.} & \quad \vect{\lambda}_k' \vect{B}_k \leq \vect{d}_k.								 		
\end{align*} 
\end{linenomath*}
If $Q_k(\vect{x}^*)$ is unbounded: $\vect{\Gamma}'_k \leftarrow \vect{\Gamma}'_k \cup \vect{\gamma}_k$, where $\vect{\gamma}_k$ is an extreme ray of  $\{\vect{\lambda}_k: \vect{\lambda}_k' \vect{B}_k \leq \vect{0}'\} $ such that $\vect{\gamma}_k'(\vect{h}_k-\vect{A}_k \vect{x}^*)>0$. If a new constraint is added, go to Step 1. Otherwise, go to Step 3.\\
\textbf{Step 3.} Set $UB = \vect{c}' \vect{x}^*+\text{wowa}_{(\vect{w},\vect{p})}(Q_1(\vect{x}^*),...,Q_K(\vect{x}^*))$. If $(UB-LB)/UB < \epsilon$, \textbf{stop}. Otherwise, find a permutation $\pi^*$ such that $Q_{\pi^*(1)}(\vect{x}^*) \geq ... \geq Q_{\pi^*(K)}(\vect{x}^*)$, and add 
\begin{linenomath*}
\begin{align*}
\theta \geq \sum_{k\in[K]} \{w^*(\sum_{i \leq k} p_{\pi^*(i)}) - w^*(\sum_{i < k} p_{\pi^*(i)})\}{\vect{\lambda}^{*}}'_{\pi^*(k)} (\vect{h}_{\pi^*(k)}-\vect{A}_{\pi^*(k)}\vect{x})
\end{align*}
\end{linenomath*}
to the constraint set of the relaxed master problem, where $\vect{\lambda}^*_k$ is an optimal solution of the problem for $Q_k(\vect{x}^*)$ in Step 2. Go to Step 1.
\end{algorithm}
In Algorithm 2, for a first stage solution $\vect{x}^* \in \hat{\vect{X}}$, where $\hat{\vect{X}} = \{\vect{x} \in \vect{X}: Q_k(\vect{x})<\infty, \forall k\in[K]\}$, 
\begin{linenomath*}
\begin{align*}
\text{wowa}_{(\vect{w}, \vect{p})} (Q_1(\vect{x}),...,Q_K(\vect{x})) & = \max_{\pi \in \Pi} \sum_{k\in[K]} \{w^*(\sum_{i \leq k} p_{\pi(i)}) - w^*(\sum_{i < k} p_{\pi(i)})\}Q_{\pi(k)}(\vect{x}), 
\\ & \qquad \forall \vect{x} \in \hat{\vect{X}}
\\ & \geq \max_{\pi \in \Pi} \sum_{k\in[K]} \{w^*(\sum_{i \leq k} p_{\pi(i)}) - w^*(\sum_{i < k} p_{\pi(i)})\} 
\\ & \qquad \cdot {\vect{\lambda}^{*}}'_{\pi(k)} (\vect{h}_{\pi(k)}-\vect{A}_{\pi(k)}\vect{x}), \quad \forall \vect{x} \in \hat{\vect{X}}
\\ & \geq  \sum_{k\in[K]} \{w^*(\sum_{i \leq k} p_{\pi^*(i)}) - w^*(\sum_{i < k} p_{\pi^*(i)})\} 
\\ & \qquad \cdot {\vect{\lambda}^{*}}'_{\pi^*(k)} (\vect{h}_{\pi^*(k)}-\vect{A}_{\pi^*(k)}\vect{x}), \quad \forall \vect{x} \in \hat{\vect{X}}
\\ & = \text{wowa}_{(\vect{w}, \vect{p})} (Q_1(\vect{x^*}),...,Q_K(\vect{x^*}))
\\ & \qquad -\sum_{k\in[K]} \{w^*(\sum_{i \leq k} p_{\pi^*(i)}) - w^*(\sum_{i < k} p_{\pi^*(i)})\} 
\\ & \qquad \cdot {\vect{\lambda}^{*}}'_{\pi^*(k)} \vect{A}_{\pi^*(k)}(\vect{x}-\vect{x}^*), \quad \forall \vect{x} \in \hat{\vect{X}}
\end{align*}
\end{linenomath*}
holds by Theorem 1 and the duality. The fact that there exists a linear under-estimate of $\text{wowa}_{(\vect{w}, \vect{p})} (Q_1(\vect{x}),...,Q_K(\vect{x}))$ that supports the function at $\vect{x}^*$ for every $\vect{x}^* \in \hat{\vect{X}}$ implies that the function is convex for the domain $\hat{\vect{X}}$. Therefore, if $\hat{\vect{X}}$ is a convex set, 
\begin{align*}
-\sum_{k\in[K]} \{w^*(\sum_{i \leq k} p_{\pi^*(i)}) - w^*(\sum_{i < k} p_{\pi^*(i)})\} \cdot {\vect{\lambda}^{*}}'_{\pi^*(k)}\vect{A}_{\pi^*(k)}
\end{align*}
is a subgradient of the function $\text{wowa}_{(\vect{w}, \vect{p})} (Q_1(\vect{x}),...,Q_K(\vect{x}))$ at $\vect{x^*}$. Hence, the cut added in Step 3 can be interpreted as a lower approximation of \\$\text{wowa}_{(\vect{w}, \vect{p})} (Q_1(\vect{x}),...,Q_K(\vect{x}))$ that uses a subgradient of $\text{wowa}_{(\vect{w}, \vect{p})} (Q_1(\vect{x}),...,Q_K(\vect{x}))$ at $x^*$.


To summarize, Algorithm 2 uses feasibility cuts from Benders decomposition method for feasibility of the first-stage solution and subgradients of WOWA function for the optimality.

\section{Computational result}
\label{sec:4}
In this section, we compare the performance of algorithms discussed by applying them to the location transportation problem with uncertain demands \cite{gabrel2014robust}. Let $m$ be the number of possible sites where facilities can be built and let $M_i$, for $i \in [m]$ be the capacity for the facility in site $i$. A commodity have to be transported to $n$ sites from the facilities built however the demands are uncertain with number $K$ of possible scenarios. Let $h^k_j$ for $j\in [n], k\in [K]$ denote the demand at site $j$ under scenario $k$, and let $p_k$ denote the probability of scenario $k$ occurring. Let $f_i$, $c_i$ for $i\in[m]$ be the fixed cost and variable cost at site $i$, respectively. Finally, let $d_{ij}$ for $i\in [m], j\in[n]$ be the transportation cost of a single unit of commodity from site $i$ to site $j$. The objective of the problem is to choose the locations for the facilities ($z_i$), the amount of the commodity to be produced at each facility ($x_i$), and the amount to be transported ($y^k_{ij}$) after the demand is revealed so that the total cost is minimized. The risk-neutral stochastic version of the problem, which minimizes the expected total cost, is given as:
\begin{linenomath*}
\begin{align*}
\min \quad & \sum_i f_i z_i + \sum_i c_i x_i + \sum_i \sum_j \sum_k d_{ij} p_k y_{ij}^k
\\ \text{s.t.} \quad & x_i \leq M_i z_i, \quad \forall i \in [m]
\\ & \sum_j y_{ij}^k \leq x_i, \quad \forall i \in [m], \forall k\in [K]
\\ & \sum_i y_{ij}^k \geq h^k_j, \quad \forall j \in [n], \forall k \in [K]
\\ & x_{ij} \geq 0, y_{ij}^k \geq 0, \quad \forall i \in [m], \forall j \in [n], \forall k \in [K]
\\ & z_i \in \{0,1\}, \quad \forall i \in [m].
\end{align*}
\end{linenomath*}
The WOWA version of the problem with a preferential weight $\vect{w}$ and the probability of scenarios as an importance weight $\vect{p}$ can be expressed as:
\begin{linenomath*}
\begin{align*}
\min \quad & \sum_i f_i z_i + \sum_i c_i x_i + \text{wowa}_{(\vect{w}, \vect{p})} (\sum_i \sum_j d_{ij} y_{ij}^1,...,\sum_i \sum_j d_{ij} y_{ij}^K)     
\\ \text{s.t.} \quad & x_i \leq M_i z_i, \quad \forall i \in [m] \tag{14} \label{form:TL_WOWA}
\\ & \sum_j y_{ij}^k \leq x_i, \quad \forall i \in [m], \forall k \in [K]
\\ & \sum_i y_{ij}^k \geq h^k_j, \quad \forall j \in [n], \forall k \in [K]
\\ & x_{ij} \geq 0, y_{ij}^k \geq 0, \quad \forall i \in [m], \forall j \in [n], \forall k \in [K]
\\ & z_i \in \{0,1\}, \quad \forall i \in [m].
\end{align*}
\end{linenomath*}
To solve (\ref{form:TL_WOWA}), we use the linear formulation of Section 3.1, and two decomposition algorithms illustrated in Section 3.2. The demands $h^k_j$ were generated from integers uniformly distributed in [$\bar{h}_j, 2\bar{h}_j$], for all $k\in [K], \forall j \in [n]$, where $\bar{h}_j$ are taken from integers in [$10, 500$]. The fixed costs ($f_i$), variable costs ($c_i$) and transportation costs ($d_{ij}$) were generated from integers with [$100, 500$], [$10, 50$] and [$1, 1000$], respectively for all $i\in [m], j\in[n]$. For the probability of each scenario, we set $p_k = \bar{p}_k/\sum_{k \in K} \bar{p}_k$, for all $k\in [K]$, where $\bar{p}_k$ is a random integer from [$1,100$]. The preferential weight $\vect{w}$ were set as $w_j = g_{0.1} (j/K) - g_{0.1}((j-1)/K))$ for $j \in [K]$, where $g_\alpha (z) = \frac{1}{1-\alpha} (1-\alpha^z)$. The weight generated by $g_\alpha(z)$ gives non-increasing weight for $\alpha \in (0,1)$, as illustrated in Figure 1. Finally, a piecewise linear function was used for the interpolation function $w^*$.

\begin{figure}[h]
\centering
\begin{tikzpicture}
\begin{axis}[
	xlabel = $z$,
	ylabel = $g_{0.1}(z)$,
	xmin=0, xmax=1,
    	ymin=0, ymax=1,
    	axis lines=left,
    	domain=0:1,
    	xmajorgrids = true,
    	ymajorgrids = true,
    	grid style = dashed,
    	]
	
    \addplot [mark=none,draw=black] {(1/0.9)*(1-0.1^x};
    \addplot+ [only marks, mark = *, forget plot, color = black, fill =black]
    		coordinates {(0.2,0.41)(0.4,0.6688)(0.6,0.832)(0.8,0.935)(1,1)};
    		
	\addplot+ [no marks, color = black, densely dashed]
		coordinates {(0.2,0)(0.2,0.41)};
	extra description/.code={
		\node at (25,20.5) {$w_1$};
	},
	\addplot+ [no marks, color = black, densely dotted]
		coordinates {(0.2,0.41)(0.4,0.41)};

	\addplot+ [no marks, color = black, densely dashed]
		coordinates {(0.4,0.41)(0.4,0.6688)};
	extra description/.code={
		\node at (45,53.94) {$w_2$};
	},
	\addplot+ [no marks, color = black, densely dotted]
		coordinates {(0.4,0.6688)(0.6,0.6688)};

	\addplot+ [no marks, color = black, densely dashed]
		coordinates {(0.6,0.6688)(0.6,0.832)};
	extra description/.code={
		\node at (65,75.04) {$w_3$};
	},
	\addplot+ [no marks, color = black, densely dotted]
		coordinates {(0.6,0.832)(0.8,0.832)};

	\addplot+ [no marks, color = black, densely dashed]
		coordinates {(0.8,0.832)(0.8,0.935)};
	extra description/.code={
		\node at (85,88.35) {$w_4$};
	},
	\addplot+ [no marks, color = black, densely dotted]
		coordinates {(0.8,0.935)(1,0.935)};

	\addplot+ [no marks, color = black, densely dashed]
		coordinates {(0.999,0.935)(0.999,1)};
	extra description/.code={
		\node at (95,96.75) {$w_5$};
	},
\end{axis}
\end{tikzpicture}
\caption{Function $g_{0.1}(z)$ and its generated weights for $K=5$.}
\end{figure}
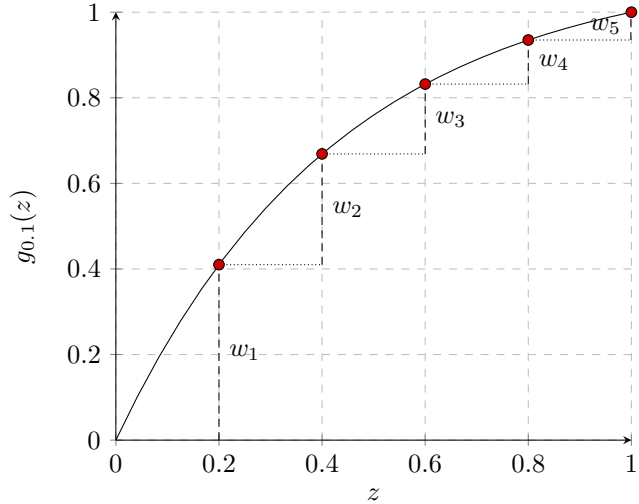

Experiments were conducted with $(n,m) \in \{(30,30), (50,50), (80,80)\}$ and $K \in \{100, 200, 500\}$. For each $(n,m,K)$, 10 instances were generated resulting in total of 90 instances. The linear formulation of the problem, the master problem and the recourse problem for both the Benders decomposition type algorithm and the subgradient based decomposition algorithm were solved with CPLEX 12.8 with default tolerances. For the tolerance level $\epsilon$ in the Benders type decomposition algorithm and the subgradient based decomposition algorithm, $10^{-6}$ was used. The algorithms were run on a computer with a 3.4GHz processor and 8GB RAM.

Table 1 shows the average performance of the 10 instances for each $(n,m,K)$. Each column in the table shows the average computation time, the average number of iterations, the average percent of the time spent on solving the master problem and the subproblem, and the number of instances solved within the time limit of 3600 seconds. Instances that were not solved within the time limit were omitted in computing the average. In the table, we used ``Benders" to denote the algorithm in Section 3.2.1, and ``Subgradient" to denote the algorithm in Section 3.2.2. 
\begin{table}[hp]
\centering
\caption{Results of the algorithms for the location-transportation problem}
\label{table1:result}
\begin{tabular}{ccccccc}
\hline \hline
(n,m,K)                 & Algorithm     & Time(s) & \# of iter. & Master(\%) & Sub(\%) & Solved \\ \hline
(30,30,100)             & Linear form & 8.47    & -        & -          & -       & 10/10  \\
                        & Benders    & 8.44    & 15.2     & 93.1\%     & 6.9\%   & 10/10  \\
                        & Subgradient    & 2.66    & 47.3     & 22.1\%     & 77.9\%  & 10/10  \\    \hline                  
(30,30,200)             & Linear form & 36.70   & -        & -          & -       & 10/10  \\
                        & Benders     & 74.40   & 16       & 98.2\%     & 1.8\%   & 10/10  \\
                        & Subgradient    & 4.80    & 54.8     & 12.7\%     & 87.3\%  & 10/10  \\ \hline
(30,30,500)             & Linear form & 3145.05 & -        & -          & -       & 5/10   \\
                        & Benders     & 1353.56 & 11.4     & 99.9\%     & 0.1\%   & 5/10   \\
                        & Subgradient    & 17.41   & 79.5     & 22.3\%     & 77.7\%  & 10/10  \\\hline
(50,50,100)             & Linear form & 21.57   & -        & -          & -       & 10/10  \\
                        & Benders    & 24.14   & 17.9     & 85.5\%     & 14.5\%  & 10/10  \\
                        & Subgradient    & 14.90   & 82.9     & 21.5\%     & 78.5\%  & 10/10  \\\hline
(50,50,200)             & Linear form & 119.46  & -        & -          & -       & 10/10  \\
                        & Benders     & 274.86  & 20.4     & 95.5\%     & 4.5\%   & 10/10  \\
                        & Subgradient    & 25.79   & 104.7    & 13.8\%     & 86.2\%  & 10/10  \\\hline
(50,50,500)             & Linear form & t.l.    & -        & -          & -       & 0/10   \\
                        & Benders    & 1081.63 & 13       & 99.7\%     & 0.3\%   & 5/10   \\
                        & Subgradient    & 33.66   & 71.4     & 8.5\%      & 91.5\%  & 10/10  \\\hline
(80,80,100)             & Linear form & 65.99   & -        & -          & -       & 10/10  \\
                        & Benders    & 65.67   & 19.4     & 34.5\%     & 65.5\%  & 10/10  \\
                        & Subgradient    & 209.28   & 87.9     & 2\%     & 98\%  & 10/10  \\\hline
(80,80,200)             & Linear form & 750.83  & -        & -          & -       & 10/10  \\
                        & Benders    & 580.08  & 23    & 67.9\%     & 32.1\%   & 8/10  \\
                        & Subgradient   & 615.96  & 135.63    & 1.7\%     & 98.3\%  & 8/10  \\\hline
(80,80,500)             & Linear form & t.l.    & -        & -          & -       & 0/10   \\
                        & Benders     & t.l. & -       & -     & -   & 0/10   \\
                        & Subgradient    & 1270.66   & 113.57     & 0.5\%      & 99.5\%  & 7/10  \\ \hline \hline
\end{tabular}
\end{table}
The results in Table 1 shows that the subgradient based decomposition algorithm outperforms both the linear formulation and the Benders decomposition approach when the number of scenarios is large. For $(n,m,K) = (50,50,500)$, none of the instances were solved within the time limit using the linear formulation, while subgradient based decomposition algorithm solved all the instances in a reasonable time. However, when the number of scenarios is small relative to the size of its nominal problem, the performance of both the linear formulation and the Benders decomposition type algorithm improved compared to the subgradient based decomposition algorithm. For example, when $(n,m,K) = (80,80,100)$, the other two algorithms showed better performance than the subgradient based decomposition algorithm. 

When the two decomposition algorithms are compared, Benders decomposition type algorithm spends most of its computational time in solving the master problem while subgradient based decomposition algorithm spends most of its computational time in solving the subproblems. The master problem of Benders type decomposition contains $K^2$ constraints initially, and hence is larger in size than that of the subgradient based decomposition algorithm. As a trade-off, more iterations are needed to solve the subgradient based method. This results in the difference in computation times spent for solving the master problem and the subproblem for each algorithm.

The preferential weight $w$ can be made to decrease more steeply by taking smaller $\alpha$ values for the generating function $g_\alpha(z)$ as shown in Figure 2. 
\begin{figure}[h]
\centering
\begin{tikzpicture}
\begin{axis}[
	xlabel = $z$,
	ylabel = $g_{\alpha}(z)$,
	xmin=0, xmax=1,
    	ymin=0, ymax=1,
    	axis lines=left,
    	domain=0:1,
    	xmajorgrids = true,
    	ymajorgrids = true,
    	grid style = dashed,
    	]
	\addplot [mark=none,draw=black] {(1/0.9)*(1-0.1^x} node[pos = 0.6, label = below:{$10^{-1}$}]{};
	\addplot [mark=none,draw=black,densely dashed] {(1/0.99)*(1-0.01^x} node[pos = 0.6, label = below:{$10^{-2}$}]{};
	\addplot [mark=none,draw=black,densely dotted] {(1/0.99999)*(1-0.00001^x} node[pos = 0.6, label = below:{$10^{-5}$}]{};
\end{axis}
\end{tikzpicture}
\caption{Function $g_{\alpha}(z)$ for $\alpha \in \{10^{-1}, 10^{-2}, 10^{-5}\}$.}
\end{figure}
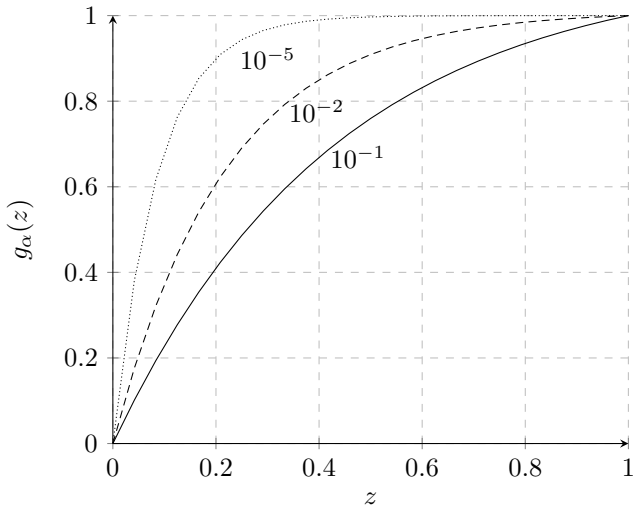    	
To observe the change in the quality of the solutions for different $\alpha$ values, we have compared the expected cost and the cost when the worst scenario is realized for solutions obtained using different $\alpha$ values. That is, given an optimal first stage solution $\vect{x}^*$ obtained from using a given $\alpha$ value, we have compared $\vect{c}' \vect{x}^* + \sum_{k\in[K]} p_k Q_k(\vect{x}^*)$, and $\vect{c}' \vect{x}^* + \max_{k\in[K]} Q_k(\vect{x}^*)$. The results are depicted in Figure 3 as expected cost and worst case cost depending on $\alpha$ values. Since $g_\alpha(z)$ is not defined for $\alpha =1 $, we have set $w_k = 1/K$ for all $k\in [K]$, which leads to the risk-neutral stochastic version of the problem that minimizes the expected cost. Also, for $\alpha = 10^{-10}$, we have set $w_1 = 1, w_k=0$ $\forall k\neq 1$ and $p_k = 1/K$ $\forall k=\{1,...,K\}$ which corresponds to the robust version of the problem that minimizes the cost of the worst case scenario. We can observe that the solutions using WOWA operator gives a solution whose expected cost and the cost for the worst scenario is somewhere between the risk-neutral stochastic version and the robust version, depending on the choice of $\alpha$.    
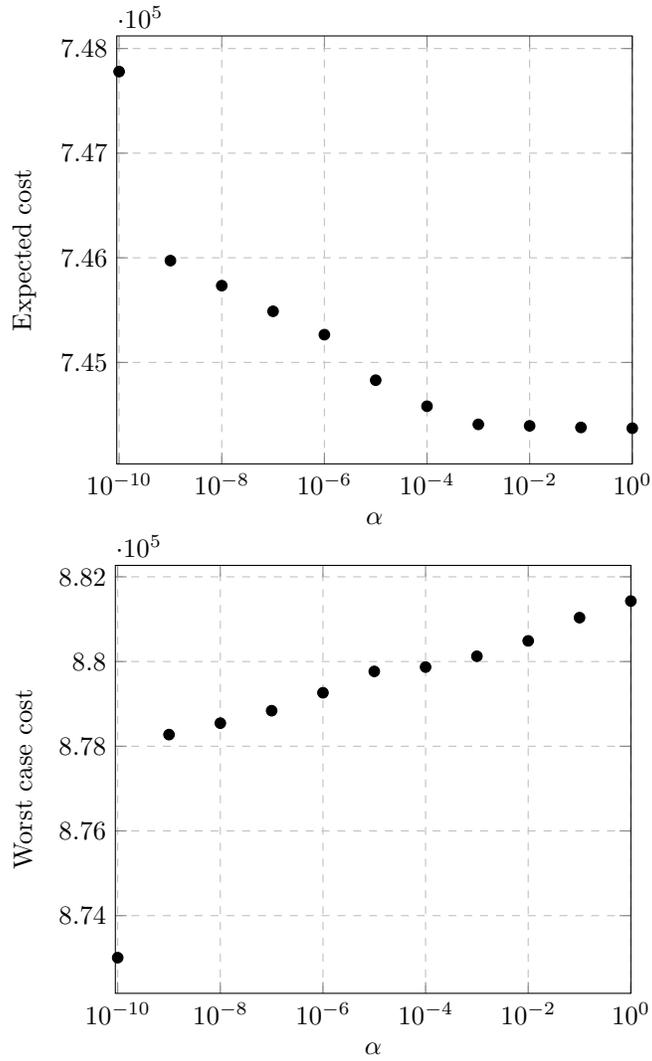
\begin{figure}[h!]
\centering
	\begin{tikzpicture}[scale = 1]
	\begin{axis}[
		xmode=log,
		xlabel = $\alpha$,
		ylabel = Expected cost,
		xmin=0.00000000009, xmax=1.001,
    		xmajorgrids = true,
    		ymajorgrids = true,
    		grid style = dashed,
	]
	\addplot[only marks, color = black] table {1	744371.0406
	0.1	744378.3847
	0.01	744393.7695
	0.001	744407.7515
	0.0001	744581.284
	0.00001	744830.1377
	0.000001	745265.2721
	0.0000001	 745488.4656
	0.00000001	745733.7655
	0.000000001	745972.8966
	0.0000000001	 747779.5787 
	};
	\end{axis}
	\end{tikzpicture}
	\quad
	\begin{tikzpicture}[scale = 1]
	\begin{axis}[
		xmode=log,
		xmin=0.00000000009, xmax=1.001,
		xlabel = $\alpha$,
		ylabel = Worst case cost,
		xmajorgrids = true,
    		ymajorgrids = true,
    		grid style = dashed,
	]
	\addplot [only marks, color = black] table {
	1	881427.6753
	0.1	881035.8638
	0.01	 880487.6357
	0.001	880124.0488
	0.0001	879868.6204
	0.00001	879766.6746
	0.000001	879262.7078
	0.0000001		878839.0163
	0.00000001	878544.6331
	0.000000001	878274.6205
	0.0000000001 873008
	};
	\end{axis}
	\end{tikzpicture}
\caption{Comparison of solutions with different $\alpha$ values}
\end{figure}

\section{Conclusion}
In this paper we applied the weighted OWA (WOWA) criterion to the two-stage decision making problems. Use of the criterion allows  probabilistic information to be accounted in robust approach. We proposed a Benders decomposition type algorithm and a subgradient based decomposition algorithm for the two-stage WOWA problem. The algorithms were tested on a location-transportation problem with WOWA criterion. The computational result shows that the subgradient based decomposition algorithm yields the best performance when the number of scenarios is large, however, for instances with small number of scenarios, both linear formulation and the Benders decomposition type algorithm showed competitive performance. The computational experiment also confirmed that using the WOWA criterion can give solutions that have better worst scenario costs with little loss in the expected costs, and vice versa.

\section*{References}
\bibliography{WOWA_two-stage}

\end{document}